\newtheorem{theorem}{Theorem}[section]
\newtheorem{lemma}[theorem]{Lemma}
\newtheorem{proposition}[theorem]{Proposition}
\theoremstyle{definition}
\theoremstyle{remark}
\newtheorem{remark}[theorem]{Remark}
\numberwithin{equation}{section}
\begin{document}
\setcounter{page}{1}

\title[  Nuclear multipliers associated to the Harmonic oscillator ]{ On nuclear $L^p$-multipliers associated to the Harmonic oscillator}

\author[E. S. Barraza]{E. Samuel Barraza}
\address{
 E. Samuel Barraza
  \endgraf
  Department of Computer Science and Artificial Intelligence 
  \endgraf
  Universidad de Sevilla.
  \endgraf
  Sevilla
  \endgraf
  Espa\~{n}a
  \endgraf
  {\it E-mail address} {\rm edglibre@gmail.com; edgbarver@alum.us.es
}
  }
\author[D. Cardona]{Duv\'an Cardona}
\address{
  Duv\'an Cardona:
  \endgraf
  Department of Mathematics  
  \endgraf
  Pontificia Universidad Javeriana.
  \endgraf
  Bogot\'a
  \endgraf
  Colombia
  \endgraf
  {\it E-mail address} {\rm duvanc306@gmail.com;
cardonaduvan@javeriana.edu.co}
  }


\dedicatory{Dedicated to the memory of Professor Guillermo Restrepo Sierra.}

\subjclass[2010]{Primary {81Q10 ; Secondary 47B10, 81Q05}.}

\keywords{ Harmonic oscillator, Fourier multiplier, Hermite multiplier, nuclear operator, traces}

\begin{abstract}
In this paper we  study multipliers associated to the harmonic oscillator (also called Hermite multipliers) belonging to the ideal of $r$-nuclear operators on Lebesgue spaces. We also study the  nuclear trace and the spectral trace of these operators.
\textbf{MSC 2010.} Primary {81Q10 ; Secondary 47B10, 81Q05}.
\end{abstract} \maketitle

\section{Introduction}
In this paper, we are interested in the $r$-nuclearity of  multipliers associated to the harmonic oscillator (also called Hermite multipliers) on $L^p(\mathbb{R}^n)$-spaces. In quantum mechanics the harmonic oscillator is the unbounded operator defined by $H=-\Delta_x+|x|^2,$ where $\Delta_x$ is the Laplacian. The operator $H$  extends to an unbounded self-adjoint operator on $L^{2}(\mathbb{R}^n) $, and its spectrum consists of the discrete set  $\lambda_\nu:=2|\nu|+n,$ $\nu\in \mathbb{N}_0^n,$ with {real eigenfunctions} $\phi_\nu,$ $\nu\in \mathbb{N}_0^n$, called Hermite functions.
 Every Hermite function  $\phi_{\nu}$  on $\mathbb{R}^n$ has the form
\begin{equation}
\phi_\nu=\Pi_{j=1}^n\phi_{\nu_j},\,\,\, \phi_{\nu_j}(x_j)=(2^{\nu_j}\nu_j!\sqrt{\pi})^{-\frac{1}{2}}H_{\nu_j}(x_j)e^{-\frac{1}{2}x_j^2}
\end{equation} 
where $x=(x_1,\cdots,x_n)\in\mathbb{R}^n$, $\nu=(\nu_1,\cdots,\nu_n)\in\mathbb{N}^n_0,$ and $H_{\nu_j}(x_j)$ denotes the Hermite polynomial of order $\nu_j$.  Furthermore, by the spectral theorem, for every $f\in\mathscr{D}(\mathbb{R}^n)$ we have
\begin{equation}
Hf(x)=\sum_{\nu\in\mathbb{N}^n_0}\lambda_\nu\widehat{f}(\phi_\nu)\phi_\nu(x),\,\,\,\widehat{f}(\phi_\nu) :=\langle f,\phi_\nu \rangle_{L^2(\mathbb{R}^n)}=\int_{\mathbb{R}^n}f(x)\phi_\nu(x)\,dx,
\end{equation} where $\widehat{f}(\phi_v) $ is the Hermite-Fourier transform of $f$ at $\nu.$
In addition,  every function $m$ on $\mathbb{N}_0^n$ has associated a multiplier (or Hermite multiplier) which is a linear operator $T_m$ of the form:
\begin{equation}
T_mf(x)=\sum_{\nu\in\mathbb{N}^n_0}m(\nu)\widehat{f}(\phi_\nu)\phi_\nu(x),\,\,\,\,f\in D(T_m).
\end{equation}
The discrete function $m$ is called the symbol of the operator $T_m.$ In particular, if  $m$ is a measurable function, the symbol of the spectral multiplier $m(H)$, defined by the functional calculus, is given by  $m(\nu):=m(\lambda_\nu)$, thus, spectral multipliers are   natural examples of multipliers associated to the harmonic oscillator.

In order that the operator $T_m:L^{p_1}(\mathbb{R}^n)\rightarrow L^{p_2}(\mathbb{R}^n)$ extends to a $r$-nuclear operator,  we  provide sufficient conditions on the symbol $m.$ We recall the notion of $r$-nuclearity as follows.  By following A. Grothendieck \cite{GRO}, we can recall that a linear operator $T:E\rightarrow F$  ($E$ and $F$ Banach spaces) is  $r$-nuclear, if
there exist  sequences $(e_n ')_{n\in\mathbb{N}_0}$ in $ E'$ (the dual space of $E$) and $(y_n)_{n\in\mathbb{N}_0}$ in $F$ such that
\begin{equation}\label{nuc}
Tf=\sum_{n\in\mathbb{N}_0} e_n'(f)y_n,\,\,\, \textnormal{ and }\,\,\,\sum_{n\in\mathbb{N}_0} \Vert e_n' \Vert^r_{E'}\Vert y_n \Vert^r_{F}<\infty.
\end{equation}
\noindent The class of $r-$nuclear operators is usually endowed with the quasi-norm
\begin{equation}
n_r(T):=\inf\left\{ \left\{\sum_n \Vert e_n' \Vert^r_{E'}\Vert y_n \Vert^r_{F}\right\}^{\frac{1}{r}}: T=\sum_n e_n'\otimes y_n \right\}
\end{equation}
\noindent and, if $r=1$, $n_1(\cdot)$ is a norm and we obtain the ideal of nuclear operators. In addition, when $E=F$ is a Hilbert space and $r=1$ the definition above agrees with the concept of  trace class operators. For the case of Hilbert spaces $H$, the set of $r$-nuclear operators agrees with the Schatten-von Neumann class of order $r$ (see Pietsch  \cite{P,P2}).\\
\\
In order to study the $r$-nuclearity and the spectral trace of Hermite multipliers, we will use results from J. Delgado \cite{D2}, on the characterization of nuclear integral  operators on $L^p(X,\mu)$ spaces, which in this case can be applied to Lebesgue spaces on $\mathbb{R}^n$. Indeed, we will prove that under certain conditions, a $r$-nuclear operator $T_m: L^{p}(\mathbb{R}^n)\rightarrow  L^{p}(\mathbb{R}^n)$ has a nuclear trace given by
\begin{eqnarray}
\textnormal{Tr}(T_m)=\sum_{\nu\in\mathbb{N}^n_0}m(\nu).
\end{eqnarray}
Furthermore, by using the version of the Grothendieck-Lidskii formula proved in O. Reinov, and Q. Latif \cite{O}  we will show that the nuclear trace of these operators coincides with the spectral trace  for $\frac{1}{r}=1+|\frac{1}{p}-\frac{1}{2}|.$ An important tool in the formulation of our results will be the asymptotic behavior of $L^p$-norms for Hermite functions (see \cite{Thangavelu}).  There are complications to estimate norms of Hermite functions on different functions spaces (see e.g. \cite{Koch}), thus, our results can be not extended immediately to function spaces like Lebesgue spaces with exponent variable.  

Now, we present some references on the subject. Sufficient conditions for the  $r$-nuclearity of spectral multipliers associated to the harmonic oscillator, but, in modulation spaces and Wiener amalgam spaces have been considered by  J. Delgado, M. Ruzhansky and B. Wang in \cite{DRB,DRB2}. The Properties of these multipliers in $L^p$-spaces have been investigated in the references S. Bagchi, S. Thangavelu  \cite{BagchiThangavelu}, J. Epperson \cite{Epperson},  K. Stempak and J.L. Torrea \cite{stempak,stempak1,stempak2},  S. Thangavelu \cite{Thangavelu,Thangavelu2} and references therein. Hermite expansions for distributions can be found in B. Simon \cite{Simon}. The $r$-nuclearity and Grothendieck-Lidskii formulae for multipliers and other types of integral operators can be found in \cite{D3,DRB2}. Sufficient conditions for the nuclearity of  pseudo-differential operators on the torus can be found in \cite{DW,Ghaemi}. The references \cite{DR,DR1,DR3,DR5} and \cite{DRTk} include a complete study on the $r$-nuclearity of multipliers (and pseudo-differential operators) on compact Lie groups and more generally on compact manifolds. On Hilbert spaces the class of $r$-nuclear operators agrees with the Schatten-von Neumann class $S_{r}(H);$ in this context operators with integral kernel on Lebesgue spaces and, in particular, operators with  kernel acting of a special way with anharmonic oscillators of the form $E_a=-\Delta_x+|x|^a,$ $a>0,$ has been considered on  Schatten classes on $L^2(\mathbb{R}^n)$ in J. Delgado and M. Ruzhansky \cite{kernelcondition}.

Our main results will be presented in the next section. We end this paper by applying our trace formula to the the Hermite semigroup (or semigroup associated to the harmonic oscillator) $e^{-tH},$ $t>0$. We would like to thanks  C\'esar del Corral from Universidad de los Andes, Bogot\'a-Colombia, for discussions and his remarks about our work.

\section{$r$-Nuclear multipliers associated to the harmonic oscillator}\label{preliminaries}
\subsection{Preliminaries and notations: nuclear operators on Lebesgue spaces}
In this section we study  $r$-nuclear multipliers $T_m$  on Lebesgue spaces. Our criteria will be formulated in terms of the symbols $m.$ First, let us observe that every multiplier $T_m$ is an operator with kernel $K_m(x,y).$ In fact, straightforward computation show that 
\begin{align*}
T_mf(x) &=\int_{\mathbb{R}^n}K_m(x,y)f(y)dy,\,\,K_m(x,y):=\sum_{\nu\in\mathbb{N}^n_0}m(\nu)\phi_\nu(x)\phi_{\nu}(y)
\end{align*}
for every  $f\in\mathscr{D}(\mathbb{R}^n).$ In order to analyze the $r$-nuclearity of $T_m$ we study its kernel $K_m$ by using the following theorem (see J. Delgado \cite{Delgado,D2}).
\begin{theorem}\label{Theorem1} Let us consider $1\leq p_1,p_2<\infty,$ $0<r\leq 1$ and let $p_1'$ be such that $\frac{1}{p_1}+\frac{1}{p_1'}=1.$ An operator $T:L^p(\mu_1)\rightarrow L^p(\mu_2)$ is $r$-nuclear if and only if there exist sequences $(g_n)_n$ in $L^{p_2}(\mu_2),$ and $(h_n)$ in $L^{q_1}(\mu_1),$ such that
\begin{equation}
\sum_{n}\Vert g_n\Vert_{L^{p_2}}^r\Vert h_n\Vert^r_{L^{q_1} } <\infty,\textnormal{        and        }Tf(x)=\int(\sum_ng_{n}(x)h_n(y))f(y)d\mu_1(y),\textnormal{   a.e.w. }x,
\end{equation}
for every $f\in {L^{p_1}}(\mu_1).$ In this case, if $p_1=p_2$ $($\textnormal{see Section 3 of} \cite{Delgado}$)$ the nuclear trace of $T$ is given by
\begin{equation}\label{trace1}
\textnormal{Tr}(T):=\int\sum_{n}g_{n}(x)h_{n}(x)d\mu_1(x).
\end{equation}
\end{theorem}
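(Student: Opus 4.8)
The plan is to recognise that the statement is a concrete reformulation of Grothendieck's abstract definition of $r$-nuclearity \eqref{nuc}, once one identifies the dual of a Lebesgue space and justifies a single Fubini-type interchange. Throughout I write $E=L^{p_1}(\mu_1)$ and $F=L^{p_2}(\mu_2)$. Since $1\le p_1<\infty$, the Riesz representation theorem identifies $E'=L^{q_1}(\mu_1)$, where $\frac{1}{p_1}+\frac{1}{q_1}=1$, each functional $e'\in E'$ having the form $e'(f)=\int h(y)f(y)\,d\mu_1(y)$ for a unique $h\in L^{q_1}(\mu_1)$ with $\Vert e'\Vert_{E'}=\Vert h\Vert_{L^{q_1}}$. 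Thus the sequences $(g_n)\subset L^{p_2}$ and $(h_n)\subset L^{q_1}$ in the statement are precisely concrete realisations of the sequences $(y_n)\subset F$ and $(e_n')\subset E'$ of \eqref{nuc}, and the summability condition $\sum_n\Vert g_n\Vert_{L^{p_2}}^r\Vert h_n\Vert_{L^{q_1}}^r<\infty$ is identical to the one in \eqref{nuc}.

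The analytic core is the identity, valid under the summability hypothesis and for every $f\in L^{p_1}$,
\begin{equation}\label{fubkey}
\int\Big(\sum_n g_n(x)h_n(y)\Big)f(y)\,d\mu_1(y)=\sum_n g_n(x)\int h_n(y)f(y)\,d\mu_1(y)\quad\text{for a.e. } x.
\end{equation}
To establish \eqref{fubkey} I first use that $0<r\le 1$ forces $\ell^r\hookrightarrow\ell^1$, so $\sum_n\Vert g_n\Vert_{L^{p_2}}\Vert h_n\Vert_{L^{q_1}}<\infty$. Setting $S(x):=\sum_n|g_n(x)|\,\Vert h_n\Vert_{L^{q_1}}$ and applying Minkowski's inequality in $L^{p_2}$ (legitimate because $p_2\ge 1$) gives $\Vert S\Vert_{L^{p_2}}\le\sum_n\Vert g_n\Vert_{L^{p_2}}\Vert h_n\Vert_{L^{q_1}}<\infty$, so $S(x)<\infty$ for a.e.\ $x$. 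For such $x$ the series $y\mapsto\sum_n g_n(x)h_n(y)$ converges in $L^{q_1}(\mu_1)$ and the bound $\sum_n|g_n(x)|\,|\int h_n f\,d\mu_1|\le S(x)\Vert f\Vert_{L^{p_1}}$ licenses the interchange \eqref{fubkey} by dominated convergence. I expect this mild Fubini argument to be the only genuinely computational obstacle.

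Both implications now follow formally. For sufficiency, given the integral representation with summable coefficients, I set $e_n'(f):=\int h_n f\,d\mu_1$ and $y_n:=g_n$; by \eqref{fubkey} one has $Tf=\sum_n e_n'(f)y_n$, and since $\Vert e_n'\Vert_{E'}^r\Vert y_n\Vert_F^r=\Vert h_n\Vert_{L^{q_1}}^r\Vert g_n\Vert_{L^{p_2}}^r$ is summable, $T$ is $r$-nuclear. Conversely, if $T$ is $r$-nuclear, I expand $T$ as in \eqref{nuc}, realise each $e_n'$ as an $h_n\in L^{q_1}$ and each $y_n$ as a $g_n\in L^{p_2}$ via duality, and read \eqref{fubkey} backwards to obtain the integral-kernel form with kernel $K(x,y)=\sum_n g_n(x)h_n(y)$.

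Finally, for the trace when $p_1=p_2$, the candidate value is the pairing $\mathrm{Tr}(T)=\sum_n e_n'(y_n)=\sum_n\int g_n(x)h_n(x)\,d\mu_1(x)$, the series converging absolutely by the $\ell^r\subset\ell^1$ remark and H\"older's inequality. The subtle point here is not the computation but the well-definedness: the number must not depend on the chosen nuclear representation of $T$. This independence fails for general Banach spaces and is where I would spend the most care; it rests on the fact that $L^p(\mu)$ possesses the approximation property, which makes the canonical map from the projective tensor product $E'\widehat{\otimes}F$ into the nuclear operators injective, so that the trace functional is well defined. I would therefore flag this well-definedness, rather than the kernel manipulation, as the conceptually hardest ingredient.
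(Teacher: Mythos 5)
The paper states this theorem without proof, importing it from Delgado \cite{Delgado,D2}, and your argument is essentially the standard duality-plus-Fubini proof underlying that source: identify $(L^{p_1}(\mu_1))'$ with $L^{q_1}(\mu_1)$, use $\ell^r\hookrightarrow\ell^1$ and Minkowski's inequality to get $S(x)=\sum_n|g_n(x)|\,\Vert h_n\Vert_{L^{q_1}}<\infty$ a.e., and exchange sum and integral. Your outline is correct (note only that the duality $(L^1)'=L^\infty$ for $p_1=1$ uses the $\sigma$-finiteness of $\mu_1$, which holds in the paper's setting of Lebesgue measure on $\mathbb{R}^n$), and you rightly flag that the well-definedness of the trace rests on the approximation property of $L^p$ rather than on the kernel computation --- a point the paper likewise delegates to the cited references.
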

If $1\leq p_1,p_2<\infty,$ by Theorem \ref{Theorem1}, we have that a multiplier $T_{m}:L^{p_1}(\mathbb{R}^n)\rightarrow L^{p_2}(\mathbb{R}^n)$ is $r$-nuclear if 
\begin{equation}\label{condition1}
s_{r}(m,p_1,p_2):=\sum_{\nu\in\mathbb{N}^n_0}|m(\nu)|^r\Vert \phi_\nu\Vert^r_{L^{p_2}(\mathbb{R}^n)}\Vert \phi_{\nu}\Vert^r_{L^{p_1'}(\mathbb{R}^n)}<\infty,
\end{equation}\label{trace11}
consequently, the nuclear trace of $T_{m}$ is given by
\begin{equation}\label{tr}
\textnormal{Tr}(T_m)=\int_{\mathbb{R}^n}\sum_{\nu\in\mathbb{N}^n}m(\nu)\phi_{\nu}(x)^2dx=\sum_{\nu\in\mathbb{N}^n}m(\nu)\int_{\mathbb{R}^n}\phi_{\nu}(x)^2dx=\sum_{\nu\in\mathbb{N}^n}m(\nu),
\end{equation}
where we have used that the $L^2$-norm of every $\phi_\nu$ is normalized.\\

Although the estimate \eqref{condition1} is a sufficient condition on $m$ in order to guarantee the $r$-nuclearity of $T_m,$  we want to study explicit conditions on $m,$ $p_{1},$ and $p_2$  in order that the series \eqref{condition1} converges. In order to find such conditions, we need to estimate the $L^p$-norms of Hermite functions which are eigenfunctions of the harmonic oscillator. This situation is analogue to one in the case of compact Lie groups where, in order to establish the $r$-nuclearity of multipliers,  it is important to estimate $L^p$-norms  of the eigenfunctions of the Laplacian (see D. Cardona \cite{Cardona} and J. Delgado and M. Ruzhansky \cite{DR}). \\

Our starting point is the following lemma on the asymptotic properties of $L^p$-norms for Hermite functions on $\mathbb{R}$ (see Lemma 4.5.2 of \cite{Thangavelu}).
\begin{lemma}\label{Lemma1}
Let us denote by $\phi_{\nu},$ $\nu\in\mathbb{N}_0,$ the one-dimensional Hermite functions. As $\nu\rightarrow \infty,$ these functions satisfy the estimates
\begin{itemize}
\item $\Vert \phi_\nu\Vert_{L^p(\mathbb{R})}\asymp \nu^{\frac{1}{2p}-\frac{1}{4}},$  $1\leq p\leq 2.$
\item $\Vert \phi_\nu\Vert_{L^p(\mathbb{R})}\asymp \nu^{-\frac{1}{8}}\ln(\nu),$ $p=4,$  
\item $\Vert \phi_\nu\Vert_{L^p(\mathbb{R})}\asymp \nu^{-\frac{1}{6p}-\frac{1}{12}},$ $4<p\leq \infty.$  
\end{itemize}
\end{lemma}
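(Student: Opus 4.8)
The plan is to read off each of the three regimes from the classical Plancherel--Rotach asymptotics for the Hermite functions, after splitting $\mathbb{R}$ according to the turning point $\mu:=\sqrt{2\nu+1}$. Writing $Q(x):=\mu^2-x^2$, the equation $\phi_\nu''+Q\,\phi_\nu=0$ shows that on the oscillatory range $0\le x\le \mu-c\mu^{-1/3}$ one has the WKB/Plancherel--Rotach form $\phi_\nu(x)\asymp Q(x)^{-1/4}\cos(\Theta_\nu(x))$, with a constant amplitude prefactor (fixed by the normalization $\|\phi_\nu\|_{L^2}=1$) and a rapidly varying phase $\Theta_\nu$; on the turning-point window $|x-\mu|\lesssim \mu^{-1/3}$ the profile is a rescaled Airy function with peak amplitude $\asymp \mu^{-1/6}\asymp\nu^{-1/12}$; and for $x\ge \mu+c\mu^{-1/3}$ the function decays like $\mathrm{Ai}$ and contributes negligibly. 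Since $|\phi_\nu|$ is even, it suffices to integrate over $x\ge 0$ and add the contributions of these regions.

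First I would treat the oscillatory part. Using that the mean of $|\cos\Theta_\nu|^p$ over a range spanning many periods is a positive constant, this contribution is comparable to $\int_0^{\mu-c\mu^{-1/3}}Q(x)^{-p/4}\,dx$; substituting $u=\mu-x$ with $Q(x)\asymp 2\mu u$ near the edge exposes three behaviours. For $p<4$ the integral converges at $u=0$ and is governed by the deep bulk, giving $\asymp \mu^{1-p/2}\asymp\nu^{1/2-p/4}$. For $p>4$ it is governed by the edge scale $u\asymp\mu^{-1/3}$, giving $\asymp\mu^{-p/6-1/3}\asymp\nu^{-p/12-1/6}$. For $p=4$ the integral $\int Q^{-1}\,dx$ is logarithmically divergent at the turning-point scale, contributing $\asymp \mu^{-1}\log\mu$, which is the source of the logarithmic factor in the borderline estimate. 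Next I would treat the turning-point window: over width $\asymp\mu^{-1/3}$ with amplitude $\asymp\mu^{-1/6}$, and since $\int_{\mathbb{R}}|\mathrm{Ai}|^p<\infty$ for every $p$, this region contributes $\asymp \mu^{-1/3-p/6}\asymp\nu^{-1/6-p/12}$.

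Comparing exponents then decides the dominant region, since $1/2-p/4>-1/6-p/12$ exactly when $p<4$. Hence for $1\le p\le 2$ the bulk dominates and $\|\phi_\nu\|_{L^p}\asymp\nu^{1/(2p)-1/4}$; for $4<p\le\infty$ the turning point (together with the matching edge of the oscillatory range) dominates and $\|\phi_\nu\|_{L^p}\asymp\nu^{-1/(6p)-1/12}$; while at the critical exponent $p=4$ the logarithmically divergent bulk integral is responsible for the logarithmic factor in the borderline estimate recorded in the statement. In every case the two-sided relation $\asymp$ is obtained because the averaging of the oscillatory factor and the fixed Airy profile supply matching lower bounds as well.

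I expect the only genuine obstacle to be the uniform justification of the Plancherel--Rotach asymptotics across the overlap of the oscillatory and Airy regions, i.e.\ verifying that the WKB amplitude $Q^{-1/4}$ and the rescaled Airy profile agree where $\mu-x\asymp\mu^{-1/3}$ (both predicting amplitude $\asymp\mu^{-1/6}$); it is precisely this uniform control that legitimizes the additive splitting of the integrals above. This uniform asymptotic analysis is the content of Lemma~4.5.2 of \cite{Thangavelu}, which I would invoke directly; the remaining steps are the elementary, case-sensitive power counting carried out above, with the critical case $p=4$ requiring the careful extraction of the logarithm.
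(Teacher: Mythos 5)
The paper contains no proof of this lemma: it is quoted directly from Thangavelu (``see Lemma 4.5.2 of \cite{Thangavelu}''), so there is no internal argument to compare yours with. Your sketch is the standard derivation of these norm estimates from the Plancherel--Rotach/Airy asymptotics, and the power counting is right in all three regimes: with $\mu=\sqrt{2\nu+1}$ the bulk contributes $\asymp\mu^{1-p/2}$ to $\int|\phi_\nu|^p$, the edge and the turning-point window contribute $\asymp\mu^{-p/6-1/3}$, and the crossover is exactly at $p=4$. Two caveats. First, your closing appeal to Lemma 4.5.2 of \cite{Thangavelu} for the uniform asymptotics is circular: that lemma \emph{is} the statement being proved. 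The analytic input you actually need is the uniform Plancherel--Rotach expansion with Airy transition (Szeg\H{o}, Theorem 8.22.9, or the pointwise eigenfunction bounds used in \cite{Koch}); once that is cited, the rest of your argument is self-contained. A small related inaccuracy: $\mathrm{Ai}\notin L^p(\mathbb{R})$ for $p\le 4$ because of the $|x|^{-1/4}$ oscillatory envelope on the negative axis, but this is harmless since your turning-point window is truncated at rescaled width $O(1)$, and only $\int_{-C}^{\infty}|\mathrm{Ai}|^p<\infty$ is needed.

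Second, and more substantively: if you carry your own $p=4$ computation to the end, $\int|\phi_\nu|^4\asymp\mu^{-1}\log\mu$ gives, after taking the fourth root, $\Vert\phi_\nu\Vert_{L^4(\mathbb{R})}\asymp\nu^{-1/8}(\ln\nu)^{1/4}$. That is the correct classical asymptotic. The factor $\ln(\nu)$ (without the exponent $1/4$) appearing in the statement is therefore only an upper bound, and the two-sided relation $\asymp$ as literally written at $p=4$ fails, since $(\ln\nu)^{1/4}/\ln\nu\to 0$. In other words, your (correct) argument does not prove the statement as transcribed here; it proves the corrected version with $(\ln\nu)^{1/4}$, and you should say so explicitly rather than leaving the exponent on the logarithm implicit. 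The discrepancy propagates to the $n$-dimensional proposition and to the $p=4$ cases of the nuclearity theorems, where the claimed two-sided equivalence $s_r(m,p_1,p_2)\asymp\varkappa(m,p_1,p_2)$ would need the same adjustment (the sufficiency direction, which only uses upper bounds, is unaffected).
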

We need to extend the previous lemma to the case of $n$-dimensional Hermite functions, we explain the process of extension in the following remark. 
\begin{remark}
By the previous lemma, there exists $k\in\mathbb{N}$ large enough,  such that
\begin{itemize}
\item $\Vert \phi_\nu\Vert_{L^p(\mathbb{R})}\asymp \nu^{\frac{1}{2p}-\frac{1}{4}},$  $1\leq p<4,$  
\item $\Vert \phi_\nu\Vert_{L^p(\mathbb{R})}\asymp \nu^{-\frac{1}{8}}\ln(\nu),$ $p=4,$  
\item $\Vert \phi_\nu\Vert_{L^p(\mathbb{R})}\asymp \nu^{-\frac{1}{6p}-\frac{1}{12}},$ $4<p\leq \infty,$  
\end{itemize}
if $\nu\geq k,$ and for all $\nu\leq k,$ $\Vert \phi_\nu\Vert_{L^p(\mathbb{R})}\asymp \rho_k:=\Vert \phi_k\Vert_{L^p(\mathbb{R})} .$
Now, let us define the following $n+1$ subsets of $\mathbb{N}^n_0:$
\begin{itemize}
\item $I_0:=\{\nu\in\mathbb{N}^n_0:\nu_j> k\textnormal{  for all } 1\leq j\leq n\},$ 
\item  $I_s,$ for $s=1,2,\cdots, n,$  consists of all   $\nu\in\mathbb{N}^n_0$ such that  for some  $j_{1},\cdots,j_{s},$ $\nu_{j_1},\cdots \nu_{j_s}\leq k $  but, for $j\neq j_{1},j_2,\cdots j_s,$ $\nu_j>k.$ So, $I_s$ is the set of $n$-tuples of non negative integers with exactly $s$ entries less than or equal to $k.$
\end{itemize}
The sequence of subsets $I_0, I_1,\cdots I_{n},$ provides a partition of $\mathbb{N}^n_0$ which will be useful in order to estimate the series \eqref{condition1}. We end this remark by observing that for all $\nu\in I_s,$
\begin{align*}
\Vert \phi_\nu \Vert_{L^p(\mathbb{R}^n)}  =\prod_{v_j\leq k}\Vert \phi_{\nu_j}\Vert_{L^p(\mathbb{R})}\prod_{v_j> k}\Vert \phi_{\nu_j}\Vert_{L^p(\mathbb{R})} 
\asymp \Vert\phi_k \Vert_{L^p(\mathbb{R}^n)}^{s}\prod_{v_j> k}\Vert \phi_{\nu_j}\Vert_{L^p(\mathbb{R})}.
\end{align*} 
\end{remark}
With the notation of the preceding remark and by using Lemma \ref{Lemma1} we have immediately the following proposition.
\begin{proposition}
Let $s\in\mathbb{N}_0,$ and $\nu\in I_s\subset \mathbb{N}^n_0.$ The n-dimensional Hermite function $\phi_\nu$ satisfies the estimates
\begin{itemize}
\item $\Vert \phi_\nu\Vert_{L^p(\mathbb{R}^n)}\asymp {k}^{s(\frac{1}{2p}-\frac{1}{4})}\cdot ( \prod_{\nu_j>k}\nu_{j})^{\frac{1}{2p}-\frac{1}{4}},$  $1\leq p<4.$
\item $\Vert \phi_\nu\Vert_{L^p(\mathbb{R}^n)}\asymp k^{-\frac{s}{8}}(\ln(k))^s\cdot \prod_{\nu_j>k}[\nu_j^{-\frac{1}{8}}\ln(\nu_j)],$ $p=4,$  
\item $\Vert \phi_\nu\Vert_{L^p(\mathbb{R}^n)}\asymp k^{s(-\frac{1}{6p}-\frac{1}{12})}\cdot (\prod_{\nu_j>k} \nu_j)^{-\frac{1}{6p}-\frac{1}{12}},$ $4<p\leq \infty.$  
\end{itemize}
\end{proposition}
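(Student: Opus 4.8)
The plan is to reduce the $n$-dimensional estimate to the one-dimensional asymptotics of Lemma \ref{Lemma1} through the tensor-product structure of the Hermite functions, treating the entries $\nu_j\leq k$ and $\nu_j>k$ separately according to the definition of $I_s$. The starting point, already recorded in the preceding remark, is that the factorization $\phi_\nu=\prod_{j=1}^n\phi_{\nu_j}$ together with Tonelli's theorem yields the \emph{exact} identity
\begin{equation*}
\|\phi_\nu\|_{L^p(\mathbb{R}^n)}^p=\int_{\mathbb{R}^n}\prod_{j=1}^n|\phi_{\nu_j}(x_j)|^p\,dx=\prod_{j=1}^n\int_{\mathbb{R}}|\phi_{\nu_j}(x_j)|^p\,dx_j=\prod_{j=1}^n\|\phi_{\nu_j}\|_{L^p(\mathbb{R})}^p,
\end{equation*}
so that $\|\phi_\nu\|_{L^p(\mathbb{R}^n)}=\prod_{j=1}^n\|\phi_{\nu_j}\|_{L^p(\mathbb{R})}$. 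No approximation enters at this stage; the asymptotics intervene only through the one-dimensional factors.

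Next I would split the product using the defining property of $I_s$: for $\nu\in I_s$ there are exactly $s$ indices with $\nu_j\leq k$ and $n-s$ indices with $\nu_j>k$, whence
\begin{equation*}
\|\phi_\nu\|_{L^p(\mathbb{R}^n)}=\Big(\prod_{\nu_j\leq k}\|\phi_{\nu_j}\|_{L^p(\mathbb{R})}\Big)\Big(\prod_{\nu_j>k}\|\phi_{\nu_j}\|_{L^p(\mathbb{R})}\Big).
\end{equation*}
For each of the $s$ small entries the index $\nu_j$ ranges over the finite set $\{0,1,\dots,k\}$, so $\|\phi_{\nu_j}\|_{L^p(\mathbb{R})}\asymp\rho_k:=\|\phi_k\|_{L^p(\mathbb{R})}$; for each of the $n-s$ large entries I would invoke the one-dimensional asymptotics in the form recorded in the preceding remark (valid on $\{\nu_j>k\}$ precisely because $k$ was chosen large enough). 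Since at most $n$ factors occur, multiplying these two-sided comparisons preserves the relation $\asymp$, with implied constant depending only on $n$ and $k$.

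It then remains to substitute the one-dimensional values. On $1\leq p<4$ one has $\|\phi_{\nu_j}\|_{L^p(\mathbb{R})}\asymp\nu_j^{\frac1{2p}-\frac14}$ for $\nu_j>k$ and $\rho_k\asymp k^{\frac1{2p}-\frac14}$, so $\rho_k^s\asymp k^{s(\frac1{2p}-\frac14)}$ and the product over the large entries collapses to $(\prod_{\nu_j>k}\nu_j)^{\frac1{2p}-\frac14}$, which is the first estimate. The ranges $p=4$ and $4<p\leq\infty$ follow identically from the corresponding lines of Lemma \ref{Lemma1}; in the borderline case $p=4$ each factor carries a logarithm, producing $(\ln k)^s\prod_{\nu_j>k}\ln(\nu_j)$ alongside the power $k^{-s/8}\prod_{\nu_j>k}\nu_j^{-1/8}$.

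Since every step is a direct consequence of the remark and of Lemma \ref{Lemma1}, there is no genuine analytic obstacle. The only points requiring care are purely bookkeeping: verifying that the implied constants stay uniform when the finitely many one-dimensional comparisons are multiplied, and correctly propagating the logarithmic factors in the case $p=4$.
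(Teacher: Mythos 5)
Your argument is correct and is essentially the paper's own: the paper states the proposition as an immediate consequence of the preceding remark, which contains exactly your two ingredients — the exact tensor-product factorization $\Vert \phi_\nu\Vert_{L^p(\mathbb{R}^n)}=\prod_{j=1}^n\Vert\phi_{\nu_j}\Vert_{L^p(\mathbb{R})}$ and the splitting of the product over the $s$ entries with $\nu_j\leq k$ (uniformly comparable to $\rho_k$) and the $n-s$ entries with $\nu_j>k$ (governed by Lemma \ref{Lemma1}). Your additional remarks on uniformity of the implied constants over finitely many factors and on the logarithmic factors at $p=4$ are exactly the bookkeeping the paper leaves implicit.
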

Extensions of Lemma \ref{Lemma1} have been obtained in \cite{Koch}, however, for our purposes will be sufficient such lemma and the preceding proposition.

\subsection{$r$-nuclearity of multipliers of the harmonic oscillator} By using the previous proposition, we study separately the $r$-nuclearity of multipliers by dividing our classification in the following three cases: $1\leq p_2<4$ and $1<p_1<\infty,$ $ p_2=4$ and $1<p_1<\infty,$ and $ 4<p_2\leq \infty,$ $1<p_1<\infty.$  We start with the following result.
\begin{theorem}
Let $1\leq p_2<4$ and $1<p_1<\infty.$ Let $T_m$ be a multiplier associated to the harmonic oscillator and $m$ its symbol. Then $T_{m}:L^{p_1}(\mathbb{R}^n)\rightarrow L^{p_2}(\mathbb{R}^n)$ extends to a $r$-nuclear operator if one of the following conditions holds:
\begin{itemize}
\item $\frac{4}{3}<p_1<\infty$ and
\begin{equation}
\varkappa(m,p_1,p_2):=\sum_{s=0}^n\sum_{\nu\in I_s}k^{\frac{sr}{2}(\frac{1}{p_2}-\frac{1}{p_1})} (\prod_{\nu_j>k}\nu_j )^{\frac{r}{2}(\frac{1}{p_2}-\frac{1}{p_1})}|m(\nu)|^r<\infty.
\end{equation}
\item  $p_1=\frac{4}{3}$ and
\begin{equation}
\varkappa(m,p_1,p_2):=\sum_{s=0}^n\sum_{\nu\in I_s}{k}^{\frac{sr}{2}(\frac{1}{p_2}-\frac{3}{4})}(\ln k)^{sr}\cdot  \prod_{\nu_j>k}[{\nu_j}^{\frac{r}{2}(\frac{1}{p_2}-\frac{3}{4})}(\ln(\nu_j))^r]|m(\nu)|^r<\infty.
\end{equation}
\item  $1<p_1<\frac{4}{3}$ and 
\begin{equation}
\varkappa(m,p_1,p_2):=\sum_{s=0}^n\sum_{\nu\in I_s} k^{\frac{sr}{2}(\frac{1}{p_2}+\frac{1}{3p_1}-1)}\cdot (\prod_{\nu_j>k} \nu_j)^{\frac{r}{2}(\frac{1}{p_2}+\frac{1}{3p_1}-1)}|m(\nu)|^r<\infty.
\end{equation} 
\end{itemize}In every case we have  $s_r(m,p_1,p_2) \asymp \varkappa(m,p_1,p_2).$
\end{theorem}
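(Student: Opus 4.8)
The plan is to reduce the claimed $r$-nuclearity to the sufficient condition \eqref{condition1}, namely that $s_r(m,p_1,p_2)<\infty$, and then to show that under the hypotheses on $p_1,p_2$ the quantity $s_r(m,p_1,p_2)$ is comparable to $\varkappa(m,p_1,p_2)$. By Theorem \ref{Theorem1} and the discussion following it, it suffices to estimate the product of norms $\Vert\phi_\nu\Vert_{L^{p_2}(\mathbb{R}^n)}^r\Vert\phi_\nu\Vert_{L^{p_1'}(\mathbb{R}^n)}^r$ appearing in the summand. The first step is therefore to partition the index set $\mathbb{N}_0^n$ using the subsets $I_0,I_1,\dots,I_n$ from the preceding remark, and on each $I_s$ to insert the asymptotic estimates furnished by the Proposition for $\Vert\phi_\nu\Vert_{L^{p_2}}$ and $\Vert\phi_\nu\Vert_{L^{p_1'}}$.

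The crux is a careful case analysis driven by where $p_2$ and the conjugate exponent $p_1'$ fall relative to the breakpoint $4$ that governs the asymptotics in Lemma \ref{Lemma1} and the Proposition. Since we are in the regime $1\le p_2<4$, the first bullet of the Proposition applies to $\phi_\nu$ in $L^{p_2}$, giving the exponent $\frac{1}{2p_2}-\frac14$. For the $L^{p_1'}$-factor the behaviour depends on $p_1$: the condition $\frac{4}{3}<p_1<\infty$ is exactly equivalent to $1<p_1'<4$, in which case the first bullet again applies and yields the combined exponent $\frac{r}{2}(\frac{1}{p_2}-\frac{1}{p_1})$ (after writing $\frac{1}{2p_1'}-\frac14=-\frac{1}{2p_1}+\frac14$ via $\frac{1}{p_1}+\frac{1}{p_1'}=1$). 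The borderline $p_1=\frac43$ corresponds to $p_1'=4$, so the logarithmic $p=4$ estimate enters and produces the $(\ln\nu_j)^r$ and $(\ln k)^{sr}$ factors. Finally $1<p_1<\frac43$ gives $p_1'>4$, so the third bullet of the Proposition supplies the exponent $-\frac{1}{6p_1'}-\frac{1}{12}$, which after substituting $\frac{1}{p_1'}=1-\frac{1}{p_1}$ combines with the $L^{p_2}$ exponent to give $\frac{r}{2}(\frac{1}{p_2}+\frac{1}{3p_1}-1)$. In each case one multiplies the contribution from the $s$ small coordinates, which is the constant power of $k$ (times a power of $\ln k$ in the borderline case), by the product over $\nu_j>k$, recovering exactly the three displayed forms of $\varkappa(m,p_1,p_2)$.

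The main obstacle is purely bookkeeping: one must verify that the arithmetic of combining the two asymptotic exponents is carried out correctly in all three subcases, that the passage between $p_1$ and $p_1'$ via the conjugacy relation lands on the right side of the breakpoint $4$, and that the $s$-dependent constant factors from the small coordinates assemble into the stated powers $k^{\frac{sr}{2}(\cdots)}$ uniformly over $\nu\in I_s$. Since the estimates in Lemma \ref{Lemma1} and the Proposition are only asymptotic (the symbol $\asymp$ hides constants that may depend on $k$ and on the fixed truncation), I would note that the comparability $s_r\asymp\varkappa$ holds with implicit constants independent of $m$, which is all that is needed to transfer finiteness of $\varkappa$ to finiteness of $s_r$. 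Once $s_r(m,p_1,p_2)<\infty$ is established, the $r$-nuclearity of $T_m\colon L^{p_1}(\mathbb{R}^n)\to L^{p_2}(\mathbb{R}^n)$ follows immediately from \eqref{condition1}, completing the proof.
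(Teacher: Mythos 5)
Your proposal is correct and follows essentially the same route as the paper: reduce to the sufficient condition \eqref{condition1} via Theorem \ref{Theorem1}, split $\mathbb{N}_0^n$ into the sets $I_s$, insert the asymptotics of the Proposition for $\Vert\phi_\nu\Vert_{L^{p_2}}$ and $\Vert\phi_\nu\Vert_{L^{p_1'}}$ according to whether $p_1'$ lies below, at, or above the breakpoint $4$, and combine the exponents via $\frac{1}{p_1}+\frac{1}{p_1'}=1$. The exponent arithmetic in all three subcases matches the paper's computation.
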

\begin{proof}
In view of Theorem \ref{Theorem1} we only  need to proof \eqref{condition1}. So, let us note that for $1\leq p_2<4$ and $\frac{4}{3}<p_1<\infty$ (which imply $1<p_1'<4$) we have the following estimate for  \eqref{condition1},
\begin{align*}
\sum_{\nu\in\mathbb{N}^n_0}|m(\nu)|^r & \Vert \phi_\nu\Vert^r_{L^{p_2}(\mathbb{R}^n)}  \Vert \phi_{\nu}\Vert^r_{L^{p_1'}(\mathbb{R}^n)} =\sum_{s=0}^{n}\sum_{\nu\in I_s}|m(\nu)|^r\Vert \phi_\nu\Vert^r_{L^{p_2}(\mathbb{R}^n)}\Vert \phi_{\nu}\Vert^r_{L^{p_1'}(\mathbb{R}^n)} \\
&\asymp \sum_{s=0}^{n}\sum_{\nu\in I_s}{k}^{sr(\frac{1}{2p_2}-\frac{1}{4})}\cdot ( \prod_{\nu_j>k}\nu_{j})^{r(\frac{1}{2p_2}-\frac{1}{4})}\cdot{k}^{sr(\frac{1}{2p_1'}-\frac{1}{4})}\cdot ( \prod_{\nu_j>k}\nu_{j})^{r(\frac{1}{2p_1'}-\frac{1}{4})}\\
&=\sum_{s=0}^{n}\sum_{\nu\in I_s}k^{\frac{sr}{2}(\frac{1}{p_2}-\frac{1}{p_1})} (\prod_{\nu_j>k}\nu_j) ^{\frac{r}{2}(\frac{1}{p_2}-\frac{1}{p_1})}|m(\nu)|^r<\infty.
\end{align*}
On the other hand, if we consider $1\leq p_2<4,$ $p_1=\frac{4}{3}$ (i.e $p_1'=4$) we obtain the following asymptotic expressions for \eqref{condition1},
\begin{align*}
\sum_{\nu\in\mathbb{N}^n_0}& |m(\nu)|^r  \Vert \phi_\nu\Vert^r_{L^{p_2}(\mathbb{R}^n)}  \Vert \phi_{\nu}\Vert^r_{L^{4}(\mathbb{R}^n)} =\sum_{s=0}^{n}\sum_{\nu\in I_s}|m(\nu)|^r\Vert \phi_\nu\Vert^r_{L^{p_2}(\mathbb{R}^n)}\Vert \phi_{\nu}\Vert^r_{L^{4}(\mathbb{R}^n)} \\
& \asymp \sum_{s=0}^{n}\sum_{\nu\in I_s}|m(\nu)|^r{k}^{sr(\frac{1}{2p_2}-\frac{1}{4})}\cdot ( \prod_{\nu_j>k}\nu_{j})^{r(\frac{1}{2p_2}-\frac{1}{4})}\cdot k^{-\frac{sr}{8}}(\ln(k))^{sr}\cdot \prod_{\nu_j>k}[\nu_j^{-\frac{r}{8}}(\ln(\nu_j))^r]\\
& =  \sum_{s=0}^{n} \sum_{\nu\in I_s}|m(\nu)|^r{k}^{\frac{sr}{2}(\frac{1}{p_2}-\frac{3}{4})}(\ln k)^{sr}\cdot  \prod_{\nu_j>k}[{\nu_j}^{\frac{r}{2}(\frac{1}{p_2}-\frac{3}{4})}(\ln(\nu_j))^r]<\infty.
\end{align*}
For the case where $1\leq p_2<4,$ $1<p_1<\frac{4}{3}$ (now, $4<p_1'<\infty$) we have
\begin{align*}
\sum_{\nu\in\mathbb{N}^n_0}& |m(\nu)|^r  \Vert \phi_\nu\Vert^r_{L^{p_2}(\mathbb{R}^n)}  \Vert \phi_{\nu}\Vert^r_{L^{p_1'}(\mathbb{R}^n)} =\sum_{s=0}^{n} \sum_{\nu\in I_s}|m(\nu)|^r\Vert \phi_\nu\Vert^r_{L^{p_2}(\mathbb{R}^n)}\Vert \phi_{\nu}\Vert^r_{L^{p_1'}(\mathbb{R}^n)} \\
& \asymp  \sum_{s=0}^{n} \sum_{\nu\in I_s}|m(\nu)|^r{k}^{sr(\frac{1}{2p_2}-\frac{1}{4})}\cdot ( \prod_{\nu_j>k}\nu_{j})^{r(\frac{1}{2p_2}-\frac{1}{4})}\cdot k^{sr(-\frac{1}{6p_1'}-\frac{1}{12})}\cdot (\prod_{\nu_j>k} \nu_j)^{-\frac{r}{6p_1'}-\frac{1}{12}}\\
& = \sum_{s=0}^{n} \sum_{\nu\in I_s} k^{\frac{sr}{2}(\frac{1}{p_2}+\frac{1}{3p_1}-1)}\cdot (\prod_{\nu_j>k} \nu_j)^{\frac{r}{2}(\frac{1}{p_2}+\frac{1}{3p_1}-1)}|m(\nu)|^r<\infty.
\end{align*}
thus, we conclude the proof.
\end{proof}

Now, we classify the $r$-nuclearity of multipliers when $p_2=4.$
\begin{theorem}
Let  $1<p_1<\infty.$ Let $T_m$ be a multiplier associated to the harmonic oscillator and $m$ its symbol. Then $T_{m}:L^{p_1}(\mathbb{R}^n)\rightarrow L^{4}(\mathbb{R}^n)$ extends to a $r$-nuclear operator if  one of the following conditions holds:
\begin{itemize}
\item $\frac{4}{3}<p_1<\infty$ and 
\begin{equation}
\varkappa(m,p_1,p_2):=\sum_{s=0}^{n} \sum_{\nu\in {I}_s}k^{\frac{sr}{2}(\frac{1}{4}-\frac{1}{p_1})}(\ln (k))^{sr}\prod_{\nu_j>k}[(\ln(\nu_j))^r\nu_j^{\frac{r}{2}(\frac{1}{4}-\frac{1}{p_1})}]|m(\nu)|^r<\infty.
\end{equation}
\item $p_1=\frac{4}{3}$ and 
\begin{equation} \varkappa(m,p_1,p_2):=\sum_{s=0}^{n}
\sum_{\nu\in {I}_s}k^{-\frac{sr}{4}}(\ln k)^{2sr}\prod_{\nu_j>k}[\nu_j^{-\frac{r}{4}}(\ln \nu_j)^{2r}]\cdot |m(\nu)|^r<\infty.
\end{equation}
\item $1<p_1<\frac{4}{3}$ and 
\begin{equation}
\varkappa(m,p_1,p_2):=\sum_{s=0}^{n} \sum_{\nu\in I_s}k^{\frac{sr}{6}(\frac{1}{p_1}-\frac{9}{4})}(\ln(k))^{sr}\prod_{\nu_j>k}[\nu_j^{\frac{r}{6}(\frac{1}{p_1}-\frac{9}{4})}\ln(\nu_j)^r  ]\cdot |m(\nu)|^r<\infty.
\end{equation}
\end{itemize} In every case we have  $s_r(m,p_1,p_2) \asymp \varkappa(m,p_1,p_2).$
\end{theorem}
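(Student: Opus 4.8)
The plan is to follow the same strategy as in the proof of the preceding theorem: by Theorem \ref{Theorem1}, the operator $T_m:L^{p_1}(\mathbb{R}^n)\to L^{4}(\mathbb{R}^n)$ is $r$-nuclear as soon as the series \eqref{condition1}, namely $s_r(m,p_1,4)=\sum_{\nu\in\mathbb{N}^n_0}|m(\nu)|^r\Vert\phi_\nu\Vert^r_{L^{4}}\Vert\phi_\nu\Vert^r_{L^{p_1'}}$, converges. Thus the whole task reduces to producing, case by case, an asymptotic expression for $s_r(m,p_1,4)$ and identifying it with the stated quantity $\varkappa(m,p_1,4)$. First I would split the index set over the partition $\mathbb{N}^n_0=\bigcup_{s=0}^n I_s$ constructed in the remark above, so that on each $I_s$ the norms factor into a fixed block of $s$ ``small'' indices (each $\asymp\Vert\phi_k\Vert$) and a product over the ``large'' indices $\nu_j>k$ to which the one-dimensional asymptotics of Lemma \ref{Lemma1} apply.

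The second step is to insert the estimates of the preceding Proposition. Here the $L^{p_2}=L^4$ factor is the same in all three cases, namely $\Vert\phi_\nu\Vert_{L^4(\mathbb{R}^n)}\asymp k^{-s/8}(\ln k)^s\prod_{\nu_j>k}[\nu_j^{-1/8}\ln(\nu_j)]$, and the three alternatives in the statement correspond exactly to the position of the conjugate exponent $p_1'$ relative to the critical value $4$: for $\frac{4}{3}<p_1<\infty$ one has $1<p_1'<4$ and uses the first line of the Proposition; for $p_1=\frac{4}{3}$ one has $p_1'=4$ and uses the $L^4$ line again; for $1<p_1<\frac{4}{3}$ one has $4<p_1'<\infty$ and uses the third line. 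In each case I would multiply the two asymptotic factors and collect the powers of $k$, the powers of $\prod_{\nu_j>k}\nu_j$, and the logarithmic factors separately.

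The only real work is then the arithmetic of the exponents, which I expect to be the one place where care is needed rather than a genuine obstacle. The key reduction is the identity $\frac{1}{p_1'}=1-\frac{1}{p_1}$, equivalently $\frac{1}{2p_1'}-\frac14=\frac14-\frac{1}{2p_1}$, which turns the combined exponent of $\nu_j$ (and of $k$, up to the factor $s$) into the advertised forms: $\frac{r}{2}(\frac14-\frac{1}{p_1})$ when $1<p_1'<4$, the pure power $-\frac{r}{4}$ (with the logarithmic factors squared) when $p_1'=4$, and $\frac{r}{6}(\frac{1}{p_1}-\frac94)$ when $4<p_1'<\infty$. The bookkeeping of the logarithms must also be tracked: the single $L^4$ factor contributes $(\ln k)^{sr}$ together with $\prod_{\nu_j>k}(\ln\nu_j)^r$ in the first and third cases, while in the borderline case $p_1=\frac43$ the two $L^4$ factors combine to give $(\ln k)^{2sr}$ and $\prod_{\nu_j>k}(\ln\nu_j)^{2r}$. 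Matching these against $\varkappa(m,p_1,4)$ in each case yields $s_r(m,p_1,4)\asymp\varkappa(m,p_1,4)$, and the convergence hypothesis on $\varkappa$ then gives the $r$-nuclearity through \eqref{condition1}, completing the proof.
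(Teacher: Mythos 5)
Your proposal is correct and follows essentially the same route as the paper's own proof: reduce to the convergence of \eqref{condition1} via Theorem \ref{Theorem1}, split over the partition $I_0,\dots,I_n$, insert the Hermite $L^p$-asymptotics according to whether $p_1'$ lies below, at, or above the critical exponent $4$, and simplify the exponents using $\frac{1}{p_1'}=1-\frac{1}{p_1}$. The exponent arithmetic you describe checks out in all three cases, including the doubled logarithms at $p_1=\frac{4}{3}$.
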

\begin{proof}
First, we consider the case $\frac{4}{3}<p_1<\infty,$ ($1<p_1'<4.$) In fact, we have
\begin{align*}
\sum_{\nu\in\mathbb{N}^n_0}& |m(\nu)|^r  \Vert \phi_\nu\Vert^r_{L^{4}(\mathbb{R}^n)}  \Vert \phi_{\nu}\Vert^r_{L^{p_1'}(\mathbb{R}^n)} =\sum_{s=0}^{n} \sum_{\nu\in {I}_s} |m(\nu)|^r  \Vert \phi_\nu\Vert^r_{L^{4}(\mathbb{R}^n)}  \Vert \phi_{\nu}\Vert^r_{L^{p_1'}(\mathbb{R}^n)}\\
&\asymp \sum_{s=0}^{n} \sum_{\nu\in {I}_s} |m(\nu)|^r k^{-\frac{sr}{8}}(\ln(k))^{sr}\cdot \prod_{\nu_j>k}[\nu_j^{-\frac{r}{8}}(\ln(\nu_j))^r]\cdot {k}^{sr(\frac{1}{2p_1'}-\frac{1}{4})}\cdot ( \prod_{\nu_j>k}\nu_{j})^{r(\frac{1}{2p_1'}-\frac{1}{4})}\\
&= \sum_{s=0}^{n} \sum_{\nu\in {I}_s} k^{\frac{sr}{2}(\frac{1}{4}-\frac{1}{p_1})}(\ln (k))^{sr}\prod_{\nu_j>k}[(\ln(\nu_j))^r\nu_j^{\frac{r}{2}(\frac{1}{4}-\frac{1}{p_1})}]|m(\nu)|^r<\infty.
\end{align*}
For the case where $p_1=\frac{4}{3},$ (i.e. $p_1'=4$), we have
\begin{align*}
\sum_{\nu\in\mathbb{N}^n_0}& |m(\nu)|^r  \Vert \phi_\nu\Vert^r_{L^{4}(\mathbb{R}^n)}  \Vert \phi_{\nu}\Vert^r_{L^{p_1'}(\mathbb{R}^n)} =\sum_{s=0}^{n} \sum_{\nu\in {I}_s} |m(\nu)|^r  \Vert \phi_\nu\Vert^{2r}_{L^{4}(\mathbb{R}^n)}  \\
&\asymp \sum_{s=0}^{n} \sum_{\nu\in {I}_s}  k^{-\frac{sr}{4}}(\ln(k))^{2sr}\cdot \prod_{\nu_j>k}[\nu_j^{-\frac{r}{4}}(\ln(\nu_j))^{2r}]\cdot|m(\nu)|^r <\infty.
\end{align*}
Finally, if we consider $1<p_1<\frac{4}{3},$ then $4<p_1'<\infty,$ and we estimate \eqref{condition1} as follows
\begin{align*}
\sum_{\nu\in\mathbb{N}^n_0}& |m(\nu)|^r  \Vert \phi_\nu\Vert^r_{L^{4}(\mathbb{R}^n)}  \Vert \phi_{\nu}\Vert^r_{L^{p_1'}(\mathbb{R}^n)} = \sum_{s=0}^{n} \sum_{\nu\in {I}_s} |m(\nu)|^r\Vert \phi_\nu\Vert^r_{L^{4}(\mathbb{R}^n)}  \Vert \phi_{\nu}\Vert^r_{L^{p_1'}(\mathbb{R}^n)}\\
&\asymp \sum_{s=0}^{n} \sum_{\nu\in {I}_s} |m(\nu)|^r k^{-\frac{sr}{8}}(\ln(k))^{sr}\cdot \prod_{\nu_j>k}[\nu_j^{-\frac{r}{8}}(\ln(\nu_j))^r]\cdot k^{sr(-\frac{1}{6p_1'}-\frac{1}{12})}\cdot (\prod_{\nu_j>k} \nu_j)^{-\frac{r}{6p_1'}-\frac{1}{12}}\\
&\asymp \sum_{s=0}^{n} \sum_{\nu\in I_s}k^{\frac{sr}{6}(\frac{1}{p_1}-\frac{9}{4})}(\ln(k))^{sr}\prod_{\nu_j>k}[\nu_j^{\frac{r}{6}(\frac{1}{p_1}-\frac{9}{4})}\ln(\nu_j)^r  ]\cdot |m(\nu)|^r<\infty.
\end{align*}
thus, we end the proof.
\end{proof}

We end this chapter with some conditions for the $r$-nuclearity of multipliers when $4<p_2\leq \infty.$ We omit the proof because we only need to repeat the arguments above on asymptotic properties of Hermite functions.
\begin{theorem}
Let  $4<p_2\leq \infty$ and  $1<p_1<\infty.$ Let $T_m$ be a multiplier associated to the harmonic oscillator and $m$ its symbol. Then $T_{m}:L^{p_1}(\mathbb{R}^n)\rightarrow L^{p_2}(\mathbb{R}^n)$ extends to a $r$-nuclear operator if one of the following conditions holds:
\begin{itemize}
\item $\frac{4}{3}<p_1<\infty $ and 
\begin{equation}
\varkappa(m,p_1,p_2):=\sum_{s=0}^{n} \sum_{\nu\in I_s}k^{\frac{sr}{2}(\frac{1}{3p_2'}-\frac{1}{p_1})}(\prod_{\nu_j>k}\nu_j)^{\frac{r}{2}(\frac{1}{3p_2'}-\frac{1}{p_1})}|m(\nu)|^r<\infty.
\end{equation}
\item $p_1=\frac{4}{3}$ and 
\begin{equation}
\varkappa(m,p_1,p_2):=\sum_{s=0}^{n} \sum_{\nu\in I_s}k^{-\frac{sr}{6}(\frac{1}{p_2}+\frac{5}{4})}(\ln(k))^{sr}\prod_{\nu_j>k}[  \nu_j^{-\frac{r}{6}(\frac{1}{p_2}+\frac{5}{4})}(\ln(\nu_j))^{r}  ]|m(\nu)|^r<\infty.
\end{equation}
\item $1<p_1<\frac{4}{3}$ and 
\begin{equation}
\varkappa(m,p_1,p_2):=\sum_{s=0}^{n} \sum_{\nu\in I_s}k^{\frac{sr}{6}(\frac{1}{p_1}-\frac{1}{p_2}-2)}\cdot (\prod_{\nu_j>k}\nu_j)^{\frac{r}{6}(\frac{1}{p_1}-\frac{1}{p_2}-2)}|m(\nu)|^r<\infty.
\end{equation} 
\end{itemize} In every case we have $s_r(m,p_1,p_2) \asymp \varkappa(m,p_1,p_2).$
\end{theorem}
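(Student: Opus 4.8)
The plan is to argue exactly as in the two preceding theorems. By Theorem \ref{Theorem1}, the finiteness of the series \eqref{condition1} is sufficient for $T_m:L^{p_1}(\mathbb{R}^n)\to L^{p_2}(\mathbb{R}^n)$ to extend to an $r$-nuclear operator, so in each of the three cases it suffices to show that $s_r(m,p_1,p_2)\asymp\varkappa(m,p_1,p_2)$ and then invoke the hypothesis $\varkappa(m,p_1,p_2)<\infty$. First I would split the sum \eqref{condition1} over the partition $\mathbb{N}^n_0=\bigcup_{s=0}^n I_s$ of the preceding remark and, on each block $I_s$, estimate the two factors $\Vert\phi_\nu\Vert^r_{L^{p_2}(\mathbb{R}^n)}$ and $\Vert\phi_\nu\Vert^r_{L^{p_1'}(\mathbb{R}^n)}$ by means of the proposition giving the $n$-dimensional asymptotics.

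Since $4<p_2\le\infty$ is fixed throughout, the factor $\Vert\phi_\nu\Vert_{L^{p_2}(\mathbb{R}^n)}$ is always governed by the third estimate of that proposition, namely $\Vert\phi_\nu\Vert_{L^{p_2}(\mathbb{R}^n)}\asymp k^{s(-\frac{1}{6p_2}-\frac{1}{12})}(\prod_{\nu_j>k}\nu_j)^{-\frac{1}{6p_2}-\frac{1}{12}}$. The three cases therefore stem only from the location of the conjugate exponent $p_1'$: for $\frac43<p_1<\infty$ one has $1<p_1'<4$ and applies the first (power) estimate; for $p_1=\frac43$ one has $p_1'=4$ and applies the logarithmic estimate; for $1<p_1<\frac43$ one has $4<p_1'<\infty$ and applies the third estimate once more. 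In each case I would multiply the two asymptotic expressions, raise the product to the power $r$, and collect the powers of $k$, the powers of $\prod_{\nu_j>k}\nu_j$, and any logarithmic factors.

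The only genuine work is the exponent bookkeeping, which I expect to be the main (though routine) obstacle. Writing $\frac{1}{p_1'}=1-\frac1{p_1}$ and $\frac1{p_2}=1-\frac1{p_2'}$, one verifies $(-\frac{1}{6p_2}-\frac1{12})+(\frac{1}{2p_1'}-\frac14)=\frac12(\frac{1}{3p_2'}-\frac1{p_1})$ in the first case, $(-\frac{1}{6p_2}-\frac1{12})+(-\frac18)=-\frac16(\frac1{p_2}+\frac54)$ in the second (with the $L^4$-norm contributing the extra factors $(\ln k)^{s}$ and $\prod_{\nu_j>k}\ln\nu_j$), and $(-\frac{1}{6p_2}-\frac1{12})+(-\frac{1}{6p_1'}-\frac1{12})=\frac16(\frac1{p_1}-\frac1{p_2}-2)$ in the third. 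Inserting the factor $|m(\nu)|^r$ reproduces precisely the three series $\varkappa(m,p_1,p_2)$, so $s_r(m,p_1,p_2)\asymp\varkappa(m,p_1,p_2)$; the assumed finiteness of $\varkappa$ then yields \eqref{condition1}, completing the proof.
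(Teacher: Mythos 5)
Your proposal is correct and follows exactly the route the paper intends: the paper in fact omits the proof of this theorem, remarking that one "only need[s] to repeat the arguments above," i.e.\ split \eqref{condition1} over the partition $I_0,\dots,I_n$, apply the $n$-dimensional asymptotics (always the third estimate for the $L^{p_2}$-factor since $4<p_2\le\infty$, and the estimate dictated by the location of $p_1'$ for the other factor), and collect exponents. Your exponent bookkeeping checks out in all three cases, so nothing is missing.
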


\section{ Spectral and nuclear traces of multipliers of the harmonic oscillator}\label{mho}
\subsection{Trace formulae for multipliers} If  $T:E\rightarrow E$ is  $r$-nuclear, with the Banach space $E$ satisfying the approximation property (see \cite{GRO}),  then 
there exist  sequences $(e_n ')_{n\in\mathbb{N}_0}$ in $ E'$ (the dual space of $E$) and $(y_n)_{n\in\mathbb{N}_0}$ in $E$ such that
\begin{equation}\label{nuc2}
Tf=\sum_{n\in\mathbb{N}_0} e_n'(f)y_n,\,\,\,\,\,\textnormal{and}\,\,\,\,\,
\sum_{n\in\mathbb{N}_0} \Vert e_n' \Vert^r_{E'}\Vert y_n \Vert^r_{F}<\infty.
\end{equation}
In this case the nuclear trace of $T$ is given by
$
\textnormal{Tr}(T)=\sum_{n\in\mathbb{N}^n_0}e_n'(f_n).
$
$L^p$-spaces have the approximation property and as consequence we can compute the nuclear trace of every $r$-nuclear multiplier $T_m:L^{p}(\mathbb{R}^n)\rightarrow L^{p}(\mathbb{R}^n).$ In fact, by \eqref{tr} we have
\begin{equation}
\textnormal{Tr}(T_m)=\sum_{\nu\in\mathbb{N}^n}m(\nu).
\end{equation}

It was proved by Grothendieck that the nuclear trace of an $r$-nuclear operator  on a Banach space coincides with the spectral trace, provided that $0<r\leq \frac{2}{3}.$ For $\frac{2}{3}\leq r\leq 1$ we recall the following result (see \cite{O}).
\begin{theorem} Let $T:L^p(\mu)\rightarrow L^p(\mu)$ be a $r$-nuclear operator as in \eqref{nuc2}. If $\frac{1}{r}=1+|\frac{1}{p}-\frac{1}{2}|,$ then, 
\begin{equation}
\textnormal{Tr}(T):=\sum_{n\in\mathbb{N}^n_0}e_n'(f_n)=\sum_{n}\lambda_n(T)
\end{equation}
where $\lambda_n(T),$ $n\in\mathbb{N}$ is the sequence of eigenvalues of $T$ with multiplicities taken into account. 
\end{theorem}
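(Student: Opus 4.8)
The plan is to connect the nuclear trace with the spectral sum through the Fredholm determinant of $T$, reducing the identity to the finite-rank case where $\textnormal{Tr}=\sum_n\lambda_n$ is elementary. Throughout, $E=L^p(\mu)$, and I would use the representation $T=\sum_n e_n'\otimes y_n$ with $\sum_n\Vert e_n'\Vert_{E'}^r\Vert y_n\Vert_E^r<\infty$ furnished by \eqref{nuc2}. Two objects must be controlled: the absolute convergence of the spectral series $\sum_n\lambda_n(T)$, so that the right-hand side is meaningful, and the stability of both traces under approximation of $T$ in the $r$-nuclear quasi-norm $n_r(\cdot)$.

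The decisive step, and the only one in which the constraint $\frac{1}{r}=1+|\frac{1}{p}-\frac{1}{2}|$ and the special geometry of $L^p(\mu)$ enter, is to show that the eigenvalues of $T$ are absolutely summable, $\sum_n|\lambda_n(T)|<\infty$. I would obtain this from the eigenvalue-distribution theory for operators on $L^p$: an $r$-nuclear operator on $L^p(\mu)$ admits a factorization compatible with the cotype of $L^p$, and the resulting Weyl-type inequalities between the eigenvalues $\lambda_n(T)$ and the approximation numbers of $T$ place $(\lambda_n(T))_n$ in $\ell^1$ precisely at the threshold exponent $\frac{1}{r}=1+|\frac{1}{p}-\frac{1}{2}|$.

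With summability in hand, I would introduce the Fredholm determinant $d_T(z)=\det(I-zT)$. Under $r$-nuclearity this is a well-defined entire function with $d_T(0)=1$, and the absolute summability of the eigenvalues yields the Hadamard factorization $d_T(z)=\prod_n(1-z\lambda_n(T))$, whence $-d_T'(0)=\sum_n\lambda_n(T)$. On the other hand, expanding the determinant to first order through the nuclear representation gives $-d_T'(0)=\sum_n e_n'(y_n)=\textnormal{Tr}(T)$. To make both identifications rigorous I would approximate $T$ by the finite-rank truncations $T_N=\sum_{n\le N}e_n'\otimes y_n$, for which $d_{T_N}'(0)$ is read off the matrix of $T_N$ (so that $-d_{T_N}'(0)=\textnormal{Tr}(T_N)=\sum_k\lambda_k(T_N)$, the matrix trace being the sum of eigenvalues with algebraic multiplicities by triangularization), and then pass to the limit using the continuity of $z\mapsto d_T(z)$ and of its derivative at $0$ with respect to $n_r(\cdot)$, together with $n_r(T-T_N)\to 0$.

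The main obstacle is the eigenvalue summability at the sharp exponent: everything downstream is essentially formal once $(\lambda_n(T))_n\in\ell^1$ is known, but this membership is exactly the hard analytic input, resting on sharp Weyl inequalities and the factorization properties peculiar to $L^p$-spaces, which is what forces the relation $\frac{1}{r}=1+|\frac{1}{p}-\frac{1}{2}|$. By contrast, the continuity of the determinant on the $r$-nuclear ideal and the finite-rank trace identity are comparatively routine.
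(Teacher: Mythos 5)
First, a point of reference: the paper does not prove this statement at all --- it is quoted verbatim from Reinov and Latif \cite{O}, so there is no internal proof to compare against. The argument in \cite{O} does not go through Fredholm determinants; it exploits that $L^p(\mu)$ (and its subspaces) has the approximation property of order $s$ at the exponent $\frac{1}{s}=1+|\frac{1}{p}-\frac{1}{2}|$, factors the $s$-nuclear operator through a Hilbert space so as to produce a related trace-class operator with the same nonzero eigenvalues and the same trace (the ``principle of related operators'' $\lambda(UV)=\lambda(VU)$, $\textnormal{tr}(UV)=\textnormal{tr}(VU)$), and then invokes the classical Hilbert-space Lidskii theorem. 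Your route is therefore genuinely different from the one the paper relies on.

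As a self-contained argument, your sketch has a real gap at the determinant stage. Two points are asserted that are in fact the hard content of any Lidskii-type theorem and do not follow from $(\lambda_n(T))_n\in\ell^1$ alone. First, for $r$-nuclear operators with $r>\frac{2}{3}$ on a general Banach space the Fredholm determinant theory is not available ``under $r$-nuclearity'' as you claim: Grothendieck's theory gives a well-behaved entire determinant only for $\frac{2}{3}$-nuclear operators, and extending it to the exponent $\frac{1}{r}=1+|\frac{1}{p}-\frac{1}{2}|$ already requires the $L^p$-geometry you invoke only for eigenvalue summability. Second, and more seriously, knowing $\sum_n|\lambda_n(T)|<\infty$ and that the zeros of $d_T$ are the reciprocals of the eigenvalues with correct multiplicities yields only $d_T(z)=e^{g(z)}\prod_n(1-z\lambda_n(T))$ for some entire $g$ with $g(0)=0$; the trace formula is exactly the statement $g'(0)=0$, and ruling out this exponential factor requires a growth (order and type) estimate on $d_T$ that is not formal --- it is the step where Lidskii-type theorems can genuinely fail (on general Banach spaces the nuclear trace need not equal the spectral sum even when the eigenvalues are summable). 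So the claim that ``everything downstream is essentially formal once $(\lambda_n(T))_n\in\ell^1$ is known'' mislocates the difficulty: both the summability and the identification $-d_T'(0)=\sum_n\lambda_n(T)$ need the sharp $L^p$ input, and as written the proposal proves neither.
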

As an immediate consequence of the preceding theorem, if $T_m:L^p(\mathbb{R}^n)\rightarrow L^p(\mathbb{R}^n)$ is a $r$-nuclear  multiplier and $\frac{1}{r}=1+|\frac{1}{p}-\frac{1}{2}|$ then, 
\begin{equation}
\textnormal{Tr}(T_m)=\sum_{\nu\in\mathbb{N}^n}m(\nu)
=\sum_{n}\lambda_n(T),
\end{equation}
where $\lambda_n(T),$ $n\in\mathbb{N}$ is the sequence of eigenvalues of $T_m$ with multiplicities taken into account. 

\subsection{The Hermite semigroup} Let us consider the Hermite functions $\phi_\nu,$  $\nu\in\mathbb{N}^n_0.$ If $P_{k},$ $k\in\mathbb{N}_0,$ is the projection on $L^{2}(\mathbb{R}^n)$ given by
\begin{equation}
P_{k}f(x):=\sum_{|\nu|=k}\widehat{f}(\phi_\nu)\phi_\nu(x),
\end{equation}
 then,  the Hermite semigroup (semigroup associated to the harmonic oscillator) $e^{-tH},$ $t>0$ is given by
 \begin{equation}
 e^{-tH}f(x)=\sum_{k=0}^{\infty}e^{-t(2k+n)}P_{k}f(x).
 \end{equation}
 For every $t>0,$ the operator $e^{-tH}$ has Schwartz kernel given by
 \begin{equation}\label{kernelt}
 K_{t}(x,y)=\sum_{\nu\in\mathbb{N}^n_0}e^{-t(2|\nu|+n)}\phi_{\nu}(x)\phi_\nu(y).
 \end{equation}
 In view of Mehler's formula (see Thangavelu \cite{Thangavelu}) the above series can be summed up and we obtain
$
 K_{t}(x,y)=(2\pi)^{-\frac{n}{2}}\sinh(2t)^{-\frac{n}{2}}e^{-(\frac{1}{2}|x|^2+|y|^2)\coth(2t)+x\cdot y\cdot \textnormal{csch}(2t))}.
 $
 The symbol of the operator $e^{-tH},$  $m_{t}(\nu)=e^{-t(2|\nu|+n)},$ has exponential decaying and  we can deduce that  $e^{-tH}$ extends to a $r$-nuclear on every $L^p(\mathbb{R}^n),$ $1\leq p<\infty,$ (this can be verified if we insert the symbol $m_t$ on every condition of the previous subsection) with trace (see \eqref{trace1}) given by
 \begin{equation}
 \textnormal{Tr}(e^{-tH})=\sum_{\nu\in\mathbb{N}^n_0}e^{-t(2|\nu|+n)}=(e^{t}-e^{-t})^{-n}.
 \end{equation}
 The trace of $e^{-tH}$  can also be computed by applying \eqref{trace11}, in fact 
 \begin{align*}
 \textnormal{Tr}(e^{-tH}) &=\int_{\mathbb{R}^n}K_t(x,x)dx =(2\pi)^{-\frac{n}{2}}\int_{\mathbb{R}^n} \sinh(2t)^{-\frac{n}{2}}e^{-|x|^2(\coth(2t)-\textnormal{csch}(2t))}dx\\
 &= (2\pi)^{-\frac{n}{2}}\sinh(2t)^{-\frac{n}{2}}(\pi/(\coth(2t)-\textnormal{csch}(2t)))^{\frac{n}{2}}\\
 &=(e^{t}-e^{-t})^{-n},
 \end{align*}
 where we have use the known formula: $\int_{\mathbb{R}}e^{-ax^2}dx=(\pi/a)^{\frac{1}{2}}.$

\bibliographystyle{amsplain}

\end{document}